\newtheorem*{theorem}{Theorem}
\newtheorem{lemma}{Lemma}
\numberwithin{equation}{section}
\def\supp{{\rm supp\,}}
\def\sp{{\rm sp\,}}
\def\BMO{{\rm BMO\,}}
\def\ZI{\ensuremath{\mathbb I}}
\def\ZN{\ensuremath{\mathbb N}}
\def\ZR{\ensuremath{\mathbb R}}
\def\md#1#2\emd{\ifx0#1
\begin{equation*} #2 \end{equation*}\fi  
\ifx1#1\begin{equation}#2\end{equation}\fi   
\ifx2#1\begin{align*}#2\end{align*}\fi   
\ifx3#1\begin{align}#2\end{align}\fi    
\ifx4#1\begin{gather*}#2\end{gather*}\fi  
\ifx5#1\begin{gather}#2\end{gather}\fi   
\ifx6#1\begin{multline*}#2\end{multline*}\fi  
\ifx7#1\begin{multline}#2\end{multline}\fi  
\ifx8#1\begin{multline*}\begin{split}#2\end{split}\end{multline*}\fi
\ifx9#1\begin{multline}\begin{split}#2\end{split}\end{multline}\fi
}
\newcommand {\e }[1]{(\ref{#1})}
\newcommand {\lem }[1]{Lemma \ref{#1}}
\begin{document}
\author{G. G\'at, U. Goginava,  and G. Karagulyan}

\title{On everywhere divergence of the strong $\Phi$-means of Walsh-Fourier series}
\address{G. G\'at, Institute of Mathematics and Computer Science, College of
Ny\'\i regyh\'aza, P.O. Box 166, Nyiregyh\'aza, H-4400 Hungary}
\email{gatgy@nyf.hu}
\address{U. Goginava, Department of Mathematics, Faculty of Exact and
Natural Sciences, Tbilisi State University, Chavchavadze str. 1, Tbilisi
0128, Georgia}
\email{zazagoginava@gmail.com}

\address{G. Karagulyan, Institute of Mathematics of Armenian National Academy
of Science, Bughramian Ave. 24b, 375019, Yerevan, Armenia}
\email{g.karagulyan@yahoo.com}
\subjclass[2010]{42C10, 42A24}%
\keywords{ Walsh series, strong summability, everywhere divergent Walsh-Fourier series}
\thanks{The first author is supported by project T\'AMOP-4.2.2.A-11/1/KONV-2012-0051}

\begin{abstract}
Almost everywhere strong  exponential summability of Fourier series in Walsh and trigonometric systems established by Rodin in 1990. We prove, that if the growth of a function $\Phi(t):[0,\infty)\to[0,\infty)$ is bigger than the exponent, then the strong $\Phi$-summability of a Walsh-Fourier series can fail everywhere. The analogous theorem for trigonometric system was proved before by one of the author of this paper.
\end{abstract}
\maketitle
\section{Introduction}
In the study of almost everywhere convergence and summability  of Fourier series the trigonometric and Walsh systems have many common properties. Kolmogorov \cite{Kol} in 1926 gave a first example of everywhere divergent trigonometric Fourier series. Existence of almost everywhere divergent Walsh-Fourier series first proved by Stein \cite{Ste}. Then Schipp in \cite{Sch1} constructed an example of everywhere divergent Walsh-Fourier series. A significant complement to these divergence theorems are investigations on almost everywhere summability of Fourier series.

Let $\Phi(t):[0,\infty)\to[0,\infty)$, $\Phi(0)=0$, be an increasing continuous function. A numerical series with partial sums $s_1,s_2,\ldots $ is said to be (strong) $\Phi$-summable to a number $s$, if
\md1\label{0-1}
\lim_{n\to\infty}\frac{1}{n}\sum_{k=1}^n\Phi\left(|s_k-s|\right)=0.
\emd
We note that the condition \e{0-1} is as strong as rapidly growing is $\Phi$, and in the case of $\Phi(t)=t^p$, $p>0$, the condition \e{0-1} coincides with $H^p$-summability, well known in the theory of Fourier series. Marcinkiewicz-Zygmund in \cite{Mar}, \cite{Zyg} established almost everywhere $H^p$-summability for arbitrary trigonometric Fourier series (ordinary and conjugate). K.~I.~Oskolkov in \cite{Osk} proved a.e. $\Phi$-summability for trigonometric Fourier series if $\Phi(t)=O(t/\log\log t)$. Then V.~Rodin \cite{Rod1} established the analogous with $\Phi$ satisfying the condition
\md1\label{0-2}
\limsup_{t\to\infty}\frac{\log\Phi(t)}{t}<\infty,
\emd
which is equivalent to the bound $\Phi(t)<\exp(Ct)$ with some $C>0$. Moreover, Rodin invented an interesting  property, that is almost everywhere $\BMO$-boundedness of Fourier series, and the  a.e. $\Phi$-summability immediately follows from this results, applying John-Nirenberg theorem. G.~A.~Karagulyan in \cite{Kar1,Kar2} proved that the condition \e{0-2} is sharp for a.e. $\Phi$-summability for Fourier series. That is if
\md1\label{0-3}
\limsup_{t\to\infty}\frac{\log\Phi(t)}{t}=\infty,
\emd
then there exists an integrable function $f\in L^1(0,2\pi)$ such that
\md0
\limsup_{n\to\infty}\frac{1}{n}\sum_{k=1}^n\Phi(|S_k(x,f)|)=\infty,\quad \limsup_{n\to\infty}\frac{1}{n}\sum_{k=1}^n\Phi(|\tilde S_k(x,f)|)=\infty,
\emd
holds everywhere on $\ZR$, where $S_k(x,f)$ and $\tilde S_k(x,f)$ are the ordinary and conjugate partial sums of Fourier series of $f(x)$.

Analogous problems are considered also for Walsh series. Almost everywhere $H^p$-summability of Walsh-Fourier series with $p>0$ proved by F.~Schipp in \cite{Sch2}. Almost everywhere $\Phi$-summability with condition \e{0-2} proved by V.~Rodin \cite{Rod2}.
\begin{theorem}[Rodin]If $\Phi(t):[0,\infty)\to[0,\infty)$, $\Phi(0)=0$, is an increasing continuous function satisfying \e{0-2}, then  the partial sums of Walsh-Fourier series of any function $f\in L^1[0,1)$ satisfy the condition
 \md0
\lim_{n\to\infty}\frac{1}{n}\sum_{k=1}^n\Phi\left(|S_k(x,f)-f(x)|\right)=0
\emd
almost everywhere on $[0,1)$.
\end{theorem}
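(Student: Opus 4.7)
The plan is to follow the $\BMO$-based approach hinted at in the introduction. The key step is to prove that for every $f\in L^1[0,1)$ and a.e.\ $x\in[0,1)$, the sequence of Walsh-Fourier partial sums $(S_k(x,f))_{k\ge1}$ has bounded mean oscillation \emph{as a sequence}, where one measures sequence-$\BMO$ by
\[
\|(s_k)\|_{*}=\sup_{1\le m\le n}\frac{1}{n-m+1}\sum_{k=m}^{n}|s_k-s_{m,n}|,\qquad s_{m,n}=\frac{1}{n-m+1}\sum_{k=m}^{n}s_k.
\]
Once an a.e.\ bound $\|(S_k(x,f))\|_{*}\le C(x,f)<\infty$ is established, the discrete John--Nirenberg inequality together with the hypothesis \e{0-2} yields the conclusion almost automatically.

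To establish the a.e.\ sequence-$\BMO$ bound I would exploit the dyadic martingale structure of the Walsh system. The dyadic partial sums $S_{2^n}(x,f)$ are conditional expectations on the dyadic $\sigma$-algebra of rank $n$, and by the classical Walsh-Paley theory their maximal function is of weak type $(1,1)$ and $S_{2^n}(x,f)\to f(x)$ a.e. The intermediate partial sums $S_k(x,f)$ with $2^n\le k<2^{n+1}$ are decomposed via Paley's formula into a dyadic partial sum at level $n$ modulated by a Walsh character and a Walsh-Dirichlet kernel, so their oscillation on any window of consecutive indices can be dominated by a square function of the Walsh coefficients together with a dyadic maximal function of $f$, both of which are a.e.\ finite. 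This is the most delicate part of the argument.

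Next, I would invoke the discrete John--Nirenberg inequality: whenever $\|(s_k)\|_{*}\le M$, for every window $[m,n]$ and every $\lambda>0$,
\[
\frac{1}{n-m+1}\,\#\{k\in[m,n]:|s_k-s_{m,n}|>\lambda\}\le C_0\, e^{-c\lambda/M},
\]
so that $\frac{1}{n-m+1}\sum_{k=m}^{n}e^{c'|s_k-s_{m,n}|/M}\le C_0'$ for any $c'<c$. Since \e{0-2} is equivalent to $\Phi(t)\le e^{Ct}$ for large $t$, applying the inequality with $M=C(x,f)$, after an appropriate rescaling to bring the John--Nirenberg exponent below $1/C$, gives a uniform bound
\[
\sup_{N\ge1}\frac{1}{N}\sum_{k=1}^{N}\Phi\bigl(|S_k(x,f)-\sigma_N(x,f)|\bigr)\le C'(x,f),
\]
where $\sigma_N(x,f)=\frac{1}{N}\sum_{k=1}^{N}S_k(x,f)$ denotes the $(C,1)$-mean.

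To pass from this boundedness to convergence to $0$, and to replace $\sigma_N(x,f)$ by $f(x)$, I would use two further ingredients: the a.e.\ $(C,1)$-summability of Walsh--Fourier series (Fine's theorem), which gives $\sigma_N(x,f)\to f(x)$ a.e.; and a standard $L^1$ density argument, writing $f=g+h$ with $g$ a Walsh polynomial (so that $S_k(x,g)=g(x)$ for all sufficiently large $k$) and $\|h\|_1$ arbitrarily small, and controlling the contribution of $h$ by the uniform exponential integrability above. The main obstacle throughout is Step~1: per-index maximal bounds on $\sup_k|S_k|$ are far from sufficient---one must control mean oscillation on \emph{arbitrary} windows of consecutive indices, uniformly in the window and for a.e.\ $x$. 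This requires a careful analysis of Walsh-Dirichlet kernels and a square-function or good-$\lambda$ argument adapted to the dyadic structure. Once that a.e.\ sequence-$\BMO$ bound is in place, Steps 2 and 3 are essentially mechanical and the hypothesis \e{0-2} enters only through the exponential comparison $\Phi(t)\le e^{Ct}$.
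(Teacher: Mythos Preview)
The paper does not contain a proof of this statement. Rodin's theorem is quoted as a known result from \cite{Rod2}; the paper's ``Proof of theorem'' section proves the authors' own divergence theorem under hypothesis \e{0-3}, not Rodin's convergence theorem under \e{0-2}. So there is no proof here to compare against.

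That said, your outline matches exactly what the introduction attributes to Rodin: almost everywhere $\BMO$-boundedness of the sequence $(S_k(x,f))_k$, followed by the John--Nirenberg inequality. The strategic shape is correct.

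One point in your sketch deserves tightening. You establish only that the sequence-$\BMO$ constant $C(x,f)$ is finite a.e., and then invoke a density argument with $f=g+h$, $g$ a Walsh polynomial and $\|h\|_1$ small. But finiteness of $C(x,h)$ a.e.\ says nothing about its size, so the exponential bound $\frac{1}{N}\sum_k e^{c'|S_k(x,h)-\sigma_N(x,h)|/C(x,h)}\le C_0'$ does not by itself make the $\Phi$-means of $h$ small when $\|h\|_1$ is small. What you actually need (and what Rodin proves) is a \emph{quantitative} inequality of weak type, e.g.\
\[
\bigl|\{x:\|(S_k(x,h))_k\|_*>\lambda\}\bigr|\lesssim \frac{\|h\|_1}{\lambda},
\]
so that off a set of small measure the $\BMO$ constant of the $h$-part is small, and only then does the John--Nirenberg step give smallness of the $\Phi$-means. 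Without this, the passage from ``bounded $\Phi$-means a.e.'' to ``$\Phi$-means converge to $0$ a.e.'' has a gap. Your Step~1 sketch (square function, dyadic maximal function) is the right toolkit for such a weak-type bound, but you should state explicitly that this is the target, not merely a.e.\ finiteness.
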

In this theorem and everywhere bellow the notation $S_k(x,f)$ stands for the partial sums of Walsh-Fourier series of  $f\in L^1[0,1)$.
In the present paper we establish, that, as in trigonometric case \cite{Kar2}, the bound \e{0-2} is sharp for a.e. $\Phi$-summability of Walsh-Fourier series. Moreover, we prove
\begin{theorem}
If an increasing function $\Phi(t):[0,\infty)\to[0,\infty)$ satisfies the condition \e{0-3}, then there exists a function $f\in L^1[0,1)$ such that
\md1\label{0-4}
\limsup_{n\to\infty}\frac{1}{n}\sum_{k=1}^n\Phi\left(|S_k(x,f)|\right)=\infty
\emd
holds everywhere on $[0,1)$.
\end{theorem}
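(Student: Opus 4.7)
The plan is to construct the function $f$ explicitly as an $L^1$-convergent lacunary series of Walsh polynomials at well-separated frequency scales, following the general strategy of the trigonometric construction of \cite{Kar2}. The key objects are "bad blocks" $P_k$, each a Walsh polynomial of controlled $L^1$-norm whose partial sums $S_n(x,P_k)$ reach very large values (comparable to a threshold $\tau_k\to\infty$) for a positive proportion of indices $n$, at \emph{every} $x\in[0,1)$. Dyadic translation invariance and the group structure of the Walsh--Paley system under the operation $\oplus$ are then used to upgrade the large-value set from an a.e.\ subset to all of $[0,1)$.

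First I would invoke \e{0-3} to extract strictly increasing sequences $\sigma_k\to\infty$ and $\lambda_k\to\infty$ satisfying $\Phi(\sigma_k/2)\geq\exp(\lambda_k\sigma_k)$, which is possible because \e{0-3} permits $\log\Phi(t)/t$ to be arbitrarily large at infinitely many $t$. I then fix summable weights $\varepsilon_k>0$ with $\sum_k\sigma_k/\lambda_k<\infty$, set the working amplitude $\tau_k:=\sigma_k/\varepsilon_k$, and choose dyadic scales $N_k\ll M_k\ll N_{k+1}$ to be determined. The core technical step is to construct, for each $k$, a bad block $P_k$: a Walsh polynomial with spectrum inside $[2^{N_k},2^{M_k})$, with $\|P_k\|_1\lesssim\sigma_k/(\varepsilon_k\lambda_k)$, and such that for every $x\in[0,1)$,
\md0
\mathcal{N}_k(x):=\{n\leq 2^{M_k}:|S_n(x,P_k)|\geq\tau_k\}
\emd
has cardinality at least $2^{M_k}/\lambda_k$. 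The natural building blocks come from modulations of Walsh--Dirichlet kernels $D_{2^N}=2^N\chi_{[0,2^{-N})}$ by Walsh functions $w_K$: for such products one has the clean identity $S_{2^N+m}(x,w_{2^N}D_L)=w_{2^N}(x)D_m(x)$, which localizes the partial sum behavior on dyadic intervals. By averaging $P_k$ over a suitable set of dyadic translates (each translate carrying the large values onto a different dyadic interval of length $2^{-N_k}$), one arranges the everywhere-property in $\mathcal{N}_k(x)$; this is the Walsh analogue of the chirp construction in \cite{Kar2}.

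Having the blocks, I set $f:=\sum_k\varepsilon_k P_k$. The convergence $f\in L^1[0,1)$ follows from the summability of $\varepsilon_k\|P_k\|_1\lesssim\sigma_k/\lambda_k$. The spectral lacunarity $N_{k+1}\gg M_k$ ensures that for any $n\leq 2^{M_k}$ the contribution of $P_\ell$ with $\ell>k$ to $S_n(x,f)$ vanishes, while the contribution from $\ell<k$ is controlled by $\sum_{\ell<k}\varepsilon_\ell\|P_\ell\|_\infty$, which is kept below $\sigma_k/2$ by choosing the $\varepsilon_\ell$ small enough (at the cost of enlarging $\tau_\ell$, which is absorbed by \e{0-3}). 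Hence for every $x\in[0,1)$ one has $|S_n(x,f)|\geq\sigma_k/2$ on the set $\mathcal{N}_k(x)$, and therefore
\md0
\frac{1}{2^{M_k}}\sum_{n=1}^{2^{M_k}}\Phi(|S_n(x,f)|)\geq\frac{|\mathcal{N}_k(x)|}{2^{M_k}}\Phi(\sigma_k/2)\geq\frac{\exp(\lambda_k\sigma_k)}{\lambda_k}\to\infty,
\emd
establishing \e{0-4} at every $x$.

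The main obstacle is the construction of the bad block $P_k$ with the \emph{everywhere} cardinality bound on $\mathcal{N}_k(x)$. Producing a single Walsh polynomial whose partial sums reach amplitude $\tau_k$ for a positive fraction of indices at every point of $[0,1)$, subject to a prescribed $L^1$-bound, is the technical heart of the argument and the Walsh analogue of the most delicate step in \cite{Kar2}; it is made tractable by the exact dyadic localization $D_{2^N}=2^N\chi_{[0,2^{-N})}$ and by the clean character-sum identities on the dyadic group, but still requires a careful symmetrization over dyadic translates to turn a.e.\ divergence into everywhere divergence. A secondary, more routine difficulty is calibrating the five parameter sequences $(\varepsilon_k,\tau_k,N_k,M_k,\lambda_k)$ consistently so that $L^1$-convergence of $\sum_k\varepsilon_k P_k$ and the interblock error bounds both hold; this reduces to standard lacunary arithmetic once the block is in hand.
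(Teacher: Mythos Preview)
Your overall architecture matches the paper's: a lacunary $L^1$-series $f=\sum_k \varepsilon_k P_k$ of Walsh polynomials with separated spectra, each block having bounded $L^1$-norm and, at every $x\in[0,1)$, a positive proportion of partial-sum indices with large values. You correctly flag the everywhere-bad block as the heart of the matter and leave it open; the paper's resolution, however, differs substantially from the symmetrization mechanism you sketch.

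The paper's block $f_n$ (\lem{L1}) has two pieces. The main piece is a Riemann-sum device: one takes points $\theta_j=(j-1)2^{-n}+(j-1)4^{-n}$ on a \emph{double} dyadic scale and forms $2^{-n}\sum_{j}(D_{u_{2^n}}(x\oplus\theta_j)-D_{u_j}(x\oplus\theta_j))$ with $u_j=2^{10(j+n)}$. For indices $l$ in a suitable arithmetic progression its partial sum reduces to $2^{-n}\sum_{j<k}D_l^*(x\oplus\theta_j)$, a Riemann sum for $\int_0^x D_m^*(x\oplus t)\,dt$ with $m=m(x)$; the double-scale choice of the $\theta_j$ is exactly what forces $w_l(\theta_j)=1$, so the modified Dirichlet kernels add coherently rather than cancel. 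A separate Rademacher-type lemma (\lem{L2}, counting sign changes in $r_1(x),\dots,r_{n+1}(x)$) shows that on a set $E_n$ of measure $>1-2e^{-n/36}$ one can pick $m(x)<2^n$ making this integral $\ge n/30$. The second piece is $2^\gamma\,\ZI_{(E_n)^c}r_n$ with $2^\gamma\asymp e^{n/36}$: it has $L^1$-norm $\le2$ because the exceptional set is exponentially small, yet on $(E_n)^c$ the polynomial itself (hence $S_l$ for all $l\ge q(n)$) has modulus $\ge 2^\gamma$. Thus ``everywhere'' is obtained not by averaging over translates but by splitting $[0,1)$ into a large good set handled by the Riemann-sum mechanism and an exponentially small bad set handled by a large-amplitude indicator.

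Two further discrepancies are minor but worth noting. The paper's block has $\|f_n\|_1\le4$, threshold $\sim n$, and index-proportion $\gtrsim 2^{-2n}$, not your more optimistic $\|P_k\|_1\lesssim\tau_k/\lambda_k$; and the tail from earlier blocks is absorbed via the crude bound $|g_j(x)|\lesssim q(n_j)$ together with the inductive gap $n_{k+1}>800k\cdot2^k q(n_k)$, rather than by shrinking the weights. These are repairable bookkeeping differences, but as it stands your proposal does not contain the $D_m^*$ Riemann-sum device, the double-scale coherence trick $w_l(\theta_j)=1$, or the exceptional-set correction term, which together constitute the actual proof.
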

It is clear this theorem generalizes Schipp's theorem on everywhere divergence of Walsh-Fourier series.  S.~V.~Bochkarev in \cite{Boch} has constructed a function $f\in L^1[0,1)$ such that
\md1\label{0-5}
\limsup_{n\to\infty }\frac{|S_n(x,f)|}{\omega_n}=\infty
\emd
everywhere on $[0,1)$, where $\omega_n=o(\sqrt {\log n })$. It is easy to observe, that this theorem implies the existence of a function $f\in L^1[0,1)$ satisfying \e{0-4} whenever
\md0
\limsup_{t\to\infty}\frac{\log\Phi(t)}{t^2}=\infty,
\emd
instead of the condition \e{0-3}, which was the best bound before. A divergence theorem like \e{0-5} with $\omega_n=o(\sqrt {\log n/\log\log n })$ for the trigonometric Fourier series established by S.~V.~Konyagin in \cite{Kon}.

We note also, that the problem of uniformly $\Phi$-summability of trigonometric Fourier series, when $f(x)$ is a continuous function considered by V.~Totik \cite{Tot1, Tot2}. He proved that the condition \e{0-2} is necessary and sufficient for uniformly  $\Phi$-summability of Fourier series of continuous functions. For the Walsh series the analogous problem is considered by S.~Fridli and F.~Schipp \cite{FrSc1,FrSc2}, V.~Rodin \cite{Rod2},  U.~Goginava and L.~Gogoladze \cite{GoGo}.
\section{Proof of theorem}
 Recall the definitions of Rademacher and Walsh functions (see \cite{GES} or \cite{SWS}). We consider the function
\md0
r_0(x)=\left\{
\begin{array}{rcl}
1, &\hbox{ if }& x\in[0,1/2),\\
-1, &\hbox{ if }& x\in[1/2,1),
\end{array}
\right.
\emd
periodically continued over the real line. The Rademacher functions are defined by $r_k(x)=r_0(2^kx)$, $k=0,1,2,\ldots$.
Walsh system is obtained by all possible products of Rademacher functions. We shall consider the Paley ordering of Walsh system.
We set $w_0(x)\equiv 1$. To define $w_n(x)$ when $n\ge 1$ we write $n$ in dyadic form
\md1\label{a35}
n=\sum_{j=0}^k\varepsilon_j2^j,
\emd
where $\varepsilon_k=1$ and $\varepsilon_j=0$ or $1$ if $j=0,1,\ldots,k-1$, and set
\md0
w_n(x)=\prod_{j=0}^k(r_j(x))^{\varepsilon_j}.
\emd
The partial sums of Walsh-Fourier series of a function $f\in L^1[0,1)$ have a formula
\md0
S_n(x,f)=\int_0^1f(t)D_n(x\oplus t)dt,
\emd
where $D_n(x)$ is the Dirichlet kernel and $\oplus$ denotes the dyadic addition. We note that
\md0
D_{2^k}(x)=\left\{
\begin{array}{rcl}
2^k, &\hbox{ if }& x\in[0,2^{-k}),\\
0, &\hbox{ if }& x\in[2^{-k},1).
\end{array}
\right.
\emd
Dirichlet kernel can be expressed by modified Dirichlet kernel $D_n^*(x)$ by
\md0
D_n(x)=w_n(x)D_n^*(x).
\emd
If $n\in \ZN$ has the form \e{a35}, then we have
\md0
D_n^*(x)=\sum_{j=0}^k\varepsilon_jD_{2^j}^*(x)=\sum_{j=0}^k\varepsilon_jr_j(x)D_{2^j}(x).
\emd
We shall write $a\lesssim b$, if $a<c\cdot b$ and $c>0$ is an absolute constant. The notation $\ZI_E$  stands for the indicator function of a set $E$.  An interval is said to be a set of the form $[a,b)$. For a dyadic interval $\delta$ we denote by $\delta^+$ and $\delta^-$ left and right halves of $\delta$. We denote the spectrum of a Walsh polynomial $P(x)=\sum_{k=0}^m a_kw_k(x)$  by
\md0
\sp P(x)=\{k\in \ZN\cup {0}:\, a_k\neq 0\}.
\emd

In the proof of following lemma we use a well known inequality
\md1\label{a25}
\left|\left\{x\in (0,1):\, \left|\sum_{k=1}^na_kr_k(x)\right|\le \lambda\right\}\right|\ge 1-2\exp\left( -\lambda^2/4\sum_{k=1}^na_k^2\right),\quad \lambda>0,
\emd
for Rademacher polynomials (see for example \cite{KaSa}, chap. 2, theorem 5).
\begin{lemma}\label{L2}
If $n\in \ZN$, $n>50$, then there exists a set $E_n\subset [0,1)$, which is a union of some dyadic intervals of the length $2^{-n}$, satisfies
the inequality
\md1\label{a27}
|E_n|>1-2\exp(-n/36),
\emd
and for any $x\in E_n$ there exists an integer $m=m(x)<2^n$ such that 
\md1\label{a20}
\int_0^x D_m^*(x\oplus t)dt\ge \frac{n}{30}.
\emd
\end{lemma}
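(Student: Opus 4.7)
The plan is to evaluate $\int_0^x D_m^*(x\oplus t)\,dt$ explicitly in terms of the dyadic digits $x_0,x_1,\ldots\in\{0,1\}$ of $x\in[0,1)$, choose $m(x)$ so that every surviving summand is at least $1/4$, and reduce the lemma to a lower bound on the count of consecutive ``$01$'' pairs in the first $n$ digits of $x$, which is controlled by \e{a25}.

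First I would use $r_j(x\oplus t)=r_j(x)r_j(t)$ together with $D_m^*=\sum_{j=0}^{k}\varepsilon_j r_j D_{2^j}$ and an elementary integration over the dyadic interval $I_j(x)$ of length $2^{-j}$ containing $x$ (split into its two halves) to obtain
\[
\int_0^x D_m^*(x\oplus t)\,dt=\sum_{j=0}^{k}\varepsilon_j r_j(x)c_j(x),\qquad c_j(x):=\min(\alpha_j(x),1-\alpha_j(x)),
\]
where $\alpha_j(x)\in[0,1)$ is the fractional part of $2^jx$. Now choose
\[
m(x):=\sum_{j=0}^{n-2}\ZI_{\{x_j=0,\,x_{j+1}=1\}}\cdot 2^j,\qquad N(x):=\#\{j\le n-2:\ x_j=0,\ x_{j+1}=1\}.
\]
For every index retained in $m$ we have $r_j(x)=+1$ and the binary expansion of $\alpha_j(x)$ begins with $01$, so $c_j(x)=\alpha_j(x)\ge 1/4$; consequently $m(x)<2^{n-1}<2^n$ and
\[
\int_0^x D_m^*(x\oplus t)\,dt\ge N(x)/4.
\]

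Next, using $\ZI_{\{x_j=0\}}=(1+r_j)/2$ and $\ZI_{\{x_{j+1}=1\}}=(1-r_{j+1})/2$ gives the identity
\[
4N(x)=(n-1)+r_0(x)-r_{n-1}(x)-\sum_{j=0}^{n-2}\sigma_j(x),\qquad \sigma_j:=r_jr_{j+1},
\]
and the key observation is that $(\sigma_0,\ldots,\sigma_{n-2})$ is itself an iid Rademacher sequence on $[0,1)$: the map $(r_0,\ldots,r_{n-1})\mapsto(\sigma_0,\ldots,\sigma_{n-2})$ is a $2$-to-$1$ surjection of $\{\pm1\}^n$ onto $\{\pm1\}^{n-1}$, so it pushes Lebesgue measure to the uniform measure on $\{\pm1\}^{n-1}$. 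Applying \e{a25} to $\sum\sigma_j$ with coefficients $1$ and $\lambda=n/3$ yields, using $n^2/(n-1)>n$,
\[
\Bigl|\bigl\{x\in[0,1):\ \bigl|{\textstyle\sum_{j=0}^{n-2}}\sigma_j(x)\bigr|\le n/3\bigr\}\Bigr|>1-2\exp(-n/36).
\]
Take $E_n$ to be this set; it is a union of dyadic intervals of length $2^{-n}$ since each $\sigma_j$ depends only on $x_0,\ldots,x_{n-1}$. On $E_n$, combining $|r_0-r_{n-1}|\le 2$ and $|\sum\sigma_j|\le n/3$ with the identity for $N$ gives $4N(x)\ge(2n-9)/3$, whence $\int_0^xD_m^*(x\oplus t)\,dt\ge(2n-9)/48\ge n/30$ for every $n>50$ (this also ensures $N(x)\ge 1$, so $m(x)\in[1,2^n)$).

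The hardest part is the joint-independence observation above: the products $r_jr_{j+1}$ share variables, yet the $2$-to-$1$ change of coordinates shows they are in fact fully independent Rademacher functions, which is exactly what permits \e{a25} to be applied directly and produces the sharp constants $n/36$ and $n/30$.
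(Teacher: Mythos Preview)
Your proof is correct and follows essentially the same approach as the paper's: both arguments define $E_n$ via the concentration inequality \e{a25} applied to the independent system $r_jr_{j+1}$, then on $E_n$ select those indices with digit pattern $x_j=0,\ x_{j+1}=1$ (equivalently $r_j=+1,\ r_{j+1}=-1$) and show each contributes at least $1/4$ to the integral. Your explicit formula $\int_0^x D_m^*(x\oplus t)\,dt=\sum_j\varepsilon_j r_j(x)c_j(x)$ and the algebraic identity for $4N(x)$ streamline the paper's more geometric derivation, and your index range $0\le j\le n-2$ makes it transparent that $E_n$ is a union of dyadic intervals of length exactly $2^{-n}$.
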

\begin{proof}
We define
\md1\label{a26}
E_n=\left\{x\in [0,1):\, \left|\sum_{j=1}^nr_j(x)r_{j+1}(x)\right|<\frac{n}{3}\right\}
\emd
Since $\phi_j(x)=r_j(x)r_{j+1}(x)$, $j=1,2,\ldots, n$ are independent functions, taking values $\pm 1$ equally, the inequality
\e{a25} holds for $\phi_j(x)$ functions too. Applying \e{a25} in \e{a26} we will get the bound \e{a27}. Observe that for a fixed $x\in E_n$ we have
\md1\label{a17}
\#\{j\in\ZN:\,1\le j\le n:\, r_j(x)r_{j+1}(x)=-1\}>n/3,
\emd
where $\#A$ denotes the cardinality of a set $A$. On the other hand the value in \e{a17} characterizes the number of  sign changes in the sequence $r_1(x),r_2(x),\ldots ,r_{n+1}(x)$. Using this fact, we may fix integers $1\le k_1<k_2<\ldots<k_\nu\le n$, such that
\md1\label{a18}
r_{k_i}(x)=1,\quad r_{k_i+1}(x)=-1,\quad i=1,2,\ldots , \nu,\quad \nu\ge \frac{n}{6}-1.
\emd
 Suppose $\delta_j$ is the dyadic interval of the length $2^{-j}$ containing the point $x$. Observe that \e{a18} is equivalent to the condition
\md1\label{a28}
x\in  \left(\left(\delta_{k_j}\right)^+\right)^-.
\emd
This implies
\md3
&\left(\left(\delta_{k_j}\right)^+\right)^+\subset [0,x),\label{a30}\\
&r_{k_j}(x\oplus t)= 1,\quad t\in\delta_{k_j}\cap[0,x).\label{a29}
\emd
Now consider the integer
\md0
m=2^{k_1}+2^{k_2}+\ldots+2^{k_\nu}.
\emd
Using \e{a30} and \e{a29}, we obtain
\md8
\int_0^x D_m^*(x\oplus t)dt&=\sum_{j=1}^\nu \int_0^x r_{k_j}(x\oplus t)D_{2^{k_j}}(x\oplus t)dt\\
&=\sum_{j=1}^\nu 2^{k_j}\int_{\delta_{k_j}\cap [0,x)} r_{k_j}(x\oplus t)dt\\
&\ge \sum_{j=1}^\nu 2^{k_j}\int_{\left(\left(\delta_{k_j}\right)^+\right)^+} r_{k_j}(x\oplus t)dt\\
&=\sum_{j=1}^\nu 2^{k_j-2}|\delta_{k_j}|=\frac{\nu}{4}>\frac{n}{30}.
\emd
\end{proof}
\begin{lemma}\label{L1}
For any integer $n>n_0$ there exists a Walsh polynomial $f(x)=f_n(x)$ such that
\md3
&\|f\|_1\le 4,\quad \sp f(x)\subset [p(n),q(n)],\\
\sup_{N\in [p(n),2q(n)]}&\frac{\#\{k\in \ZN:\,1\le k\le N,\, |S_k(x,f)|>n/40 \}}{N}\gtrsim 2^{-2n},\label{a32}
\emd
where $p(n)$, $q(n)$ are positive integers, and $n_0$ is an absolute constant.
\end{lemma}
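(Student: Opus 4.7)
We take $M=2^{2n}$, set $p(n)=M$ and $q(n)=M+2^{n+1}$, and look for $f$ in the form
\md0
f(t)=w_M(t)\,g(t)
\emd
with $g$ a Walsh polynomial satisfying $\|g\|_1\le 4$ and $\sp g\subset[0,2^{n+1})$. Then $\sp f\subset M\oplus[0,2^{n+1})=[M,M+2^{n+1})=[p(n),q(n))$ and $\|f\|_1=\|g\|_1\le 4$. The purpose of the modulation by $w_M$ is the \emph{transfer identity}
\md1\label{p-transfer}
S_{M+j}(x,f)\;=\;w_M(x)\,S_j(x,g),\qquad 0\le j<2^{n+1},
\emd
which is immediate from $\hat f(k)=\hat g(k\oplus M)$ (vanishing on $[0,M)$ by the spectral hypothesis on $g$) together with $w_k=w_M w_{k-M}$ for $k\in[M,2M)$. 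In view of \eqref{p-transfer}, the conclusion \eqref{a32} reduces to producing $g$ such that for every $x\in E_n$ some $j=j(x)\in[1,2^{n+1})$ satisfies $|S_j(x,g)|>n/40$: then $|S_{M+j(x)}(x,f)|>n/40$ with $M+j(x)\in[p(n),q(n))$, and taking $N=q(n)$ in the supremum of \eqref{a32} yields density $\ge 1/q(n)\gtrsim 2^{-2n}$.

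The construction of $g$ is guided by the identity (which follows from $D_m^*(x\oplus t)=w_m(x)w_m(t)D_m(x\oplus t)$)
\md1\label{p-id}
\int_0^x D_m^*(x\oplus t)\,dt\;=\;w_m(x)\,S_m\bigl(x,\,w_m\,\ZI_{[0,x)}\bigr).
\emd
Comparing with \lem{L2}, the (strongly $x$-dependent) Walsh polynomial $h_x(t)=w_{m(x)}(t)\,\ZI_{[0,x)}(t)$ satisfies $|S_{m(x)}(x,h_x)|\ge n/30$ for each $x\in E_n$. The task is to replace $h_x$ by a single $x$-free polynomial $g$.

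From the proof of \lem{L2}, the index $m(x)$ is determined by the Rademacher values $r_1(x),\dots,r_{n+1}(x)$, and therefore is constant on each dyadic cell $\Delta_\ell$ of level $n+1$; denote its common value by $m_\ell$. Let $y_\ell$ be the right endpoint of $\Delta_\ell$ and define the templates $h_\ell(t):=w_{m_\ell}(t)\,\ZI_{[0,y_\ell)}(t)$; each $h_\ell$ has $\|h_\ell\|_1\le 1$ and $\sp h_\ell\subset[0,2^{n+1})$. We then take
\md0
g(t)=\sum_\ell c_\ell\,h_\ell(t),
\emd
with coefficients $|c_\ell|\le 4\cdot 2^{-n-1}$, so that $\|g\|_1\le 4$ by the triangle inequality. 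The signs of $c_\ell$ are chosen by a probabilistic argument (independent $\epsilon_\ell\in\{\pm1\}$), invoking the sub-Gaussian Rademacher bound \eqref{a25}, so that for every $x\in E_n$ (in some cell $\Delta_{\ell_0}$) some partial sum $|S_j(x,g)|$ exceeds $n/40$: the diagonal contribution to $S_{m_{\ell_0}}(x,g)$ gives, via \eqref{p-id} and \lem{L2}, a term of the right order, while the off-diagonal terms are estimated by concentration of the random sum, using the spectral localization of each $h_\ell$ and the distinctness of the Walsh characters $w_{m_\ell}$.

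The main technical obstacle is the uniform (in $x\in E_n$) control of the random-sign sum: the concentration estimate for the off-diagonal must survive the union bound over the $O(2^{n+1})$ cells $\Delta_{\ell_0}$. This is where the specific structure of $E_n$ (via the Rademacher polynomial $\sum_j r_j r_{j+1}$) is exploited a second time, in tandem with an $L^2$/maximal-type estimate for the partial sums $S_{m_{\ell_0}}(x,h_\ell)$ across the cells.
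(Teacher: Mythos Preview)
Your reduction via the transfer identity \eqref{p-transfer} is correct, and the identity \eqref{p-id} is a clean way to connect Lemma~\ref{L2} to partial sums. But the core construction of $g$ has a scale mismatch that cannot be fixed within the framework you propose. With $|c_\ell|\le 4\cdot 2^{-n-1}$, the ``diagonal'' term is
\[
|c_{\ell_0}\,S_{m_{\ell_0}}(x,h_{\ell_0})|\;\lesssim\;2^{-n}\cdot \frac{n}{30},
\]
which is of order $n\,2^{-n}$, not $n/40$. Your concentration step is designed to make the off-diagonal \emph{small}; it does nothing to make the total \emph{large}. If instead you tried to use the randomness to build up a large sum, the sub-Gaussian bound gives at best $\bigl(\sum_\ell |c_\ell S_{m_{\ell_0}}(x,h_\ell)|^2\bigr)^{1/2}$, and even granting each summand size $\sim n\,2^{-n}$ (which is already optimistic for $\ell\ne\ell_0$, since then the modulation $w_{m_\ell}$ does not match the partial-sum index $m_{\ell_0}$), this is $\sim n\,2^{-n/2}$, still negligible. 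In short, the $L^1$-budget forces $c_\ell=O(2^{-n})$, and nothing in your scheme produces the $2^n$-fold constructive interference needed to reach $n/40$. A secondary gap: you only treat $x\in E_n$, whereas the lemma must hold for every $x\in[0,1)$.

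The paper's construction is quite different. It abandons the idea of a single short frequency block and instead takes $q(n)$ doubly exponential in $n$: one sets $u_j=2^{10(j+n)}$ and defines
\[
f(x)=2^{\gamma}\ZI_{(E_n)^c}(x)r_n(x)+\frac{1}{2^n}\sum_{j=1}^{2^n}\bigl(D_{u_{2^n}}(x\oplus\theta_j)-D_{u_j}(x\oplus\theta_j)\bigr),
\]
with $\theta_j=(j-1)2^{-n}+(j-1)4^{-n}$. The first summand handles $(E_n)^c$ (its measure is $\lesssim e^{-n/36}$, so $2^\gamma\asymp e^{n/36}$ keeps $\|f\|_1$ bounded). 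For $x\in E_n\cap\Delta_k$, the two-scale rational structure of $\theta_j$ forces $w_l(\theta_j)=1$ for all $l$ in an arithmetic progression $L(x)$ of gap $2^{2n}$, so that $S_l(x,f)=2^{-n}\sum_{j<k}D_m^*(x\oplus\theta_j)$ is a genuine Riemann sum for $\int_0^x D_m^*(x\oplus t)\,dt\ge n/30$. The density $\gtrsim 2^{-2n}$ then comes from the gap of $L(x)$ inside $[u_{k-1},u_k)$. The point is that constructive interference is \emph{engineered arithmetically}, not hoped for probabilistically; this is exactly what your template-with-random-signs approach lacks.
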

\begin{proof}
We define
\md0
\theta_k=\frac{k-1}{2^n}+\frac{k-1}{4^n}\in \Delta_k=\left[\frac{k-1}{2^n},\frac{k}{2^n}\right),\quad k=1,2,\ldots ,2^n.
\emd
Let $E_n$ be the set obtained in \lem{L2}. We define $f(x)$ by
\md1\label{a11}
f(x)=2^{\gamma}\cdot \ZI_{(E_n)^c}(x)r_n(x)
+\frac{1}{2^{n}}\sum_{j=1}^{2^n}\bigg(D_{u_{2^n}}(x\oplus\theta_j)-D_{u_j}(x\oplus\theta_j)\bigg),
\emd
where
\md3
&\gamma= \big[\log_2(\exp(n/36))\big],\label{a36}\\
&u_j=2^{10 (j+n)},\quad j=1,2,\ldots , 2^n.\label{a2}
\emd
We have
\md2
&\sp\left(\ZI_{(E_n)^c}(x)r_n(x)\right)\subset [2^n,2^{n+1}),\\
&\sp\left(D_{u_{2^n}}(x\oplus\theta_j)-D_{u_j}(x\oplus\theta_j)\right)\subset (u_j,u_{2^n}]\subset [2^n,u_{2^n}],
\emd
and therefore
\md0
\sp f(x)\subset [p(n),q(n)],\quad p(n)=2^n,\quad q(n)=u_{2^n}.
\emd
Using \e{a27} and \e{a36}, we obtain
\md0
\|f\|_1\le 2^\gamma(1-|E_n|)+2\le \exp(n/36)\cdot 2\exp(-n/36)+2=  4.
\emd
From the expression \e{a11} it follows that any value taken by $f(x)$ is either $0$ or a sum of different numbers of the form
$\pm 2^k$ with $k\ge \gamma$. This implies
\md0
|f(x)|\ge 2^\gamma\ge \frac{\exp(n/36)}{2}>\frac{n}{40},\quad n>n_0=150,
\emd
whenever
\md1\label{a31}
x\in \supp f=(E_n)^c\bigcup\left(\bigcup_{j=1}^{2^n-1}(\theta_j\oplus \supp D_{u_j})\right).
\emd
On the other hand if $l\ge q(n)$ and $x$ satisfies \e{a31}, then we have
\md0
|S_l(x,f)|=|f(x)|> \frac{n}{40}.
\emd
Thus we obtain
\md0
\frac{\#\{k\in \ZN:\,1\le k\le 2q(n),\, |S_k(x,f)|>n/40 \}}{2q(n)}\ge \frac{1}{2}> 2^{-2n},
\emd
which implies \e{a32}. Now consider the case when \e{a31} doesn't hold. We may suppose that
\md1\label{a22}
x\in \Delta_k\setminus \supp f,\quad 1\le k\le 2^n.
\emd
According to \lem{L2}, there exists an integer $m=m(x)<2^n$ satisfying one of the inequality \e{a20}. First we suppose it satisfies the first one.
Together with $m$ we consider
\md0
p=p(x)=m(x)(1+2^n)<2^{2n}.
\emd
Using the definition of $\theta_j$, observe, that
\md2
&w_m(\theta_k)=w_m\left(\frac{k-1}{2^n}\right),\\
&w_{m\cdot 2^n}(\theta_k)=w_{m\cdot 2^n}\left(\frac{k-1}{4^n}\right)=w_m\left(\frac{k-1}{2^n}\right),
\emd
and therefore we get
\md1\label{a13}
w_p(\theta_k)=w_m(\theta_k)w_{m\cdot 2^n}(\theta_k)=1,\quad k=1,2,\ldots, 2^n.
\emd
Define
\md1\label{a6}
L(x)=\{l\in\ZN:\, l=p+\mu\cdot2^{2n}, \mu \in \ZN\}.
\emd
Once again using the definition of $\theta_k$ as well as \e{a13}, we conclude
\md1\label{a14}
w_l(\theta_k)=w_p(\theta_k)w_{\mu\cdot 2^{2n}}(\theta_k)=1,\quad k=1,2,\ldots,2^n,\quad l\in L(x).
\emd
Suppose
\md1\label{a16}
l\in L(x)\cap [u_{k-1},u_k),\quad k\le 2^n.
\emd
Since $x$ is taken outside of $\supp f$, we have
\md9\label{a3}
S_l(x,f)&=\frac{1}{2^{n}}\left(\sum_{j=1}^{k-1}D_l(x\oplus\theta_j)-\sum_{j=1}^{k-1}D_{u_j}(x\oplus\theta_j)\right)\\
&=\frac{1}{2^{n}}\sum_{j=1}^{k-1}D_l(x\oplus\theta_j).
\emd
On the other hand by \e{a14} we get
\md9\label{a12}
\frac{1}{2^{n}}\left|\sum_{j=1}^{k-1}D_l(x\oplus\theta_j)\right|
&=\frac{1}{2^{n}}\left|\sum_{j=1}^{k-1}w_l\left(\theta_j\right)D_l^*(x\oplus\theta_j)\right|\\
&=\frac{1}{2^{n}}\left|\sum_{j=1}^{k-1}D_l^*(x\oplus\theta_j)\right|.
\emd
Using the definition of $D_l^*(x)$, observe that
\md0
D_l^*(x)=D_p^*(x)+D_{\mu\cdot 2^{2n}}^*(x)=D_{m}^*(x)+D_{m\cdot 2^n}^*(x)+D_{\mu\cdot 2^{2n}}^*(x).
\emd
Since the supports of the functions $D_{m\cdot 2^n}^*(t)$ and $D_{\mu\cdot 2^{2n}}^*(t)$ are in $\Delta_1$, we conclude
\md1\label{a24}
D_l^*(x\oplus\theta_j)=D_m^*(x\oplus\theta_j),\quad x\in \Delta_k, \quad j\neq  k.
\emd
Thus, applying \lem{L2} and  \e{a12}, we obtain the bound
\md9\label{a15}
\frac{1}{2^{n}}\left|\sum_{j=1}^{k-1}D_l(x\oplus\theta_j)\right|=&\frac{1}{2^{n}}\left|\sum_{j=1}^{k-1}D_m^*(x\oplus\theta_j)\right|\\
\ge &\int_0^xD_m^*(x\oplus t)dt-1>\frac{n}{30}-1>\frac{n}{40},\quad n>n_0=150,
\emd
which holds whenever $l$ satisfies \e{a16}.
Taking into account of \e{a3} and \e{a15}, we get
\md0
\frac{\#\{l\in \ZN:\,1\le l\le u_k,\, |S_l(x,f)|>n/40 \}}{u_k}\ge \frac{\#\left(L(x)\cap [u_{k-1},u_k)\right)}{u_k}\gtrsim 2^{-2n},
\emd
which completes the proof of lemma.
\end{proof}
\begin{proof}[Proof of theorem]
We may choose numbers $\{n_k\}_{k=1}^\infty$ and $\{\alpha_k\}_{k=1}^\infty$ such that
\md3
&p(n_{k+1})>2q(n_k),\label{c2}\\
&\Phi\left(\frac{n_k}{50\cdot 2^k}\right)>\exp(2n_k),\label{c3}\\
&n_{k+1}>800k2^kq(n_k),\label{c4}
\emd
where $p(n)$ and $q(n)$ are the sequences determined in \lem{L1}. We just note that  \e{c3} may guarantee by using \e{0-3}. Applying \lem{L1}, we get polynomials $g_k(x)=f_{n_k}(x)$, which satisfy \e{a32} for any $x\in [0,1)$. We have
\md0
f(x)=\sum_{k=1}^\infty2^{-k}g_k(x)\in L^1[0,1).
\emd
The condition \e{c2} provides increasing spectrums of these polynomials. Thus, if $p(n_k)<l\le q(n_k)$, then we have
\md9\label{c5}
|S_l(x,f)|=&\left|\sum_{j=1}^\infty2^{-j}S_l(x,g_j)\right|=\left|\sum_{j=1}^{k-1}2^{-j}g_j(x)+2^{-k}S_l(x,g_k)\right|\\
\ge&2^{-k}|S_l(x,g_k)|-4(k-1)q(n_{k-1}).
\emd
 Applying \lem{L1}, for any $x\in[0,1)$ we may find a number $N_k\in [p(n_k),2q(n_k)]$ such  that
\md0
\#\{l\in \ZN:\,p(n_k)<l\le N_k,\, |S_l(x,g_k)|>n_k/40 \}\gtrsim \frac{N_k}{2^{2n_k}}.
\emd
Thus, using also \e{c4} and \e{c5}, we conclude
\md0
\#\{l\in \ZN:\,p(n_k)<l\le N_k,\, |S_l(x,f)|> n_k/50\cdot  2^k \}\gtrsim \frac{N_k}{2^{2n_k}}
\emd
and finally,  using \e{c3} we obtain
\md0
\frac{1}{N_k}\sum_{j=1}^{N_k}\Phi(|S_j(x,f)|)\gtrsim\frac{1}{N_k}\cdot\frac{N_k}{2^{2n_k}}\cdot\Phi\left( \frac{n_k}{50\cdot  2^k }\right)\ge \left(\frac{e}{2}\right)^{2n_k},\quad k=1,2,\ldots.
\emd
This implies the divergence of  $\Phi$-means at a point  $x\in[0,1)$ taken arbitrarily, which completes the proof of the theorem.
\end{proof}

\end{document}